\documentclass{article}
\usepackage{amsmath, amssymb, amsthm, epsf, graphicx}

\topmargin-1in \textheight9.9in \textwidth6.8in \pagestyle{plain}
\oddsidemargin -0.2in \evensidemargin -0.2in

\newtheorem{theorem}{Theorem}

\newtheorem{corollary}{Corollary}

\newtheorem{question}{Question}

\def\bb #1{ {\mathbb #1} }
\def\c #1{ {\mathcal #1} }

%%%-----------------------------------------------------------------------
\begin{document}
\title{On the $p$-adic meromorphy of the \\ function field height zeta function}

\author{C. Douglas Haessig \\
University of Rochester, New York}

\date{\today}
\maketitle

%%%-----------------------------------------------------------------------
\begin{abstract}
In this brief note, we will investigate the number of points of
bounded (twisted) height in a projective variety defined over a
function field, where the function field comes from a projective
variety of dimension greater than or equal to 2. A first step in 
this investigation is to understand the $p$-adic analytic properties of the height zeta
function. In particular, we will show that for a large class of
projective varieties this function is $p$-adic meromorphic.
\end{abstract}

%%%-----------------------------------------------------------------------
\section{Introduction}

{\bf Number Fields.} Let $K$ be a number field of degree $m$ and
denote by $H_K$ the relative multiplicative height function on the
$K$-rational points in projective space $\bb P^n(K)$. Consider the
number of $K$-rational points of height bounded above by $d$:
\[
N_d(K) := \# \{ x \in \bb P^n(K) | H_K(x) \leq d \}.
\]
An asymptotic estimate for $N_d(K)$ was provided by Schanuel
\cite{Shanuel}, who proved
\[
N_d(K) = a(K,n) d^{n+1} +
\begin{cases}
\c O(d \log d) & \text{if } K = \bb Q \text{ and } n=1 \\
\c O(d^{n+1-\frac{1}{m}}) & \text{otherwise}
\end{cases}
\]
where
\[
a(K,n) = \frac{h R/\omega}{\zeta_K(n+1)} \left( \frac{2^{r_1}
(2\pi)^{r_2}}{\sqrt{ |D_K| }} \right)^{n+1} (n+1)^{r_1 + r_2 -1},
\]
$h$ is the class number of $K$, $R$ the regulator, $\omega$ the
number of roots of unity in $K$, $\zeta_K$ the Dedekind zeta
function of $K$, $D_K$ the discriminant, and $r_1, r_2$ the number
of real and complex embeddings of $K$.

\bigskip\noindent{\bf Function Fields.} A function field analogue of
Schanuel's results is also known. Let $\bb F_q$ be the finite field
with $q$ elements, $q = p^r$. Let $X$ be a non-singular projective
curve of genus $g$ defined over $\bb F_q$, and let $\bb F_q(X)$
denote its function field. With an $\bb F_q(X)$-rational point $y :=
[y_0 : y_1 : \cdots : y_n] \in \bb P^n(\bb F_q(X))$ we may define
the (logarithmic) height
\[
h_X(y) := - deg \inf_i (y_i)
\]
where $\inf_i (y_i)$ denotes the greatest divisor $D$ of $X$ such
that $D \leq (y_i)$ for all $i$ with $y_i \not= 0$. As before,
consider the number of $\bb F_q(X)$-rational points of height
bounded above by $d$:
\[
N_d(\bb P^n) := \# \{ y \in \bb P^n(\bb F_q(X)) | h_X(y) \leq d \}.
\]
An asymptotic estimate for $N_d(\bb P^n)$ was described by Serre
\cite[p.19]{Serre}, and improved upon by Wan \cite{WanHieght} as
follows: for any $\epsilon > 0$,
\begin{equation}\label{E: WanAsymp}
N_d(\bb P^n) = \frac{h q^{(n+1)(1-g)}}{\zeta_X(n+1) (q-1)}q^{(n+1)d}
+ \c O(q^{\frac{d}{2}+\epsilon}).
\end{equation}

Wan's proof began by demonstrating that the associated function
field height zeta function of $\bb P^n$ is a rational function. This
meant, for all $d$, $N_d(\bb P^n) = \sum \beta_j^d - \sum
\alpha_i^d$ where $\alpha_i$ and $\beta_j$ are the zeros and poles
of this rational function. The next step was to analyze these zeros
and poles to arrive at (\ref{E: WanAsymp}).

\bigskip\noindent{\bf Function Field Variations.} Three
natural variations of the above study are:
\begin{enumerate}
\item Consider $X$ with $dim(X) \geq 2$,
\item Replace $\bb P^n$ with an arbitrary projective variety $Y$
defined over $\bb F_q(X)$,
\item Twist the height function $h_X$ by an element $A \in
GL_{n+1}(\bb F_q(X))$.
\end{enumerate}

Let us be more precise. Let $X$ be a projective integral scheme $X$
of finite type over $\bb F_q$ and denote by $\bb F_q(X)$ its
function field. Fix $A \in GL_{n+1}(\bb F_q(X))$. With an $\bb
F_q(X)$-rational point $y := [y_0 : y_1 : \cdots : y_n] \in \bb
P^n(\bb F_q(X))$ we may define the twisted (logarithmic) height
\[
h_{X, A}(y) := - deg \inf (Ay)
\]
where $\inf (Ay)$ means we are taking the infimum of the divisors of
the coordinates of $Ay$. Let $Y \hookrightarrow \bb P^n_{\bb
F_q(X)}$ be a projective variety defined over $\bb F_q(X)$. Define
the (twisted) height zeta function of $Y$, with respect to $A$, as
\[
Z_\text{ht}(Y(\bb F_q(X)), A, T) := \sum_{d=0}^\infty N_d(A,Y) T^d
\]
where
\[
N_d(A,Y) := \# \{ y \in Y(\bb F_q(X)) | h_{X, A}(y) \leq d \}.
\]
When $A$ is the identity matrix, we will simply write
$Z_\text{ht}(Y(\bb F_q(X)), T)$ for the height zeta function.

The first step in the study of $N_d(A, Y)$ is to determine the
($p$-adic) analytic behavior of the height zeta function. When $Y =
\bb P^n_{\bb F_q(X)}$ and the monoid of effective divisor classes of
$X$ has rank 1, Wan \cite{WanHieght} proved that $Z_\text{ht}(\bb
P^n(\bb F_q(X)), T)$ is $p$-adic meromorphic, and rational when
$dim(X) = 1$.

In this paper, under the same assumption that the monoid of
effective divisor classes of $X$ has rank 1, we will demonstrate
that for a large class of projective varieties $Y$ defined over $\bb
F_q(X)$ with $dim(X) \geq 2$, the twisted height zeta function is
$p$-adic meromorphic. As an example, suppose $X$ is a complete
intersection with $dim(X) \geq 3$ and consider the projective
hypersurface $Y$ defined by $F := y_0^r G(y_1, \ldots, y_n)$ where
$G$ is a homogenous polynomial in $\bb F_q(X)[y_1, \ldots, y_n]$ of
degree $d$. As a consequence of Theorem \ref{T: Reduction} and
Theorem \ref{T: Main}, if $n > d^{dim(X) +1}$, then
$Z_\text{ht}(Y(\bb F_q(X)), T)$ is $p$-adic meromorphic.

Our approach will be to reduce the study of the height zeta function
to the study of two other zeta functions: the zeta functions of
divisors and the Riemann-Roch zeta function. The zeta function of
divisors has been studied in \cite{WanZetaCycles}
\cite{WanPureLfunctions} \cite{HaessigWanDivisors} and is expected
to have a good $p$-adic theory. The Riemann-Roch zeta function is
new, and section \ref{S: RRZeta} explores a few of its properties.

%%%-----------------------------------------------------------------------
\section{Reduction of the height zeta function}

In this section, we will reduce the study of the function field
height zeta function to the study of two other zeta functions,
namely the zeta function of divisors and the Riemann-Roch zeta
function.

Define $Div(X / \bb F_q)$ as the free abelian group generated by the
irreducible $\bb F_q$-subvarieties of $X$ of codimension 1; we call
these prime divisors. We say a divisor $D \in Div(X / \bb F_q)$ is
effective if its unique decomposition into prime divisors $P_i$,
written $D = \sum n_i P_i$, has $n_i \geq 0$ for each $i$. Let
$E(X)$ denote the monoid of effective divisors of $X$. Effectiveness
places a partial order on the group $Div(X / \bb F_q)$ as follows:
we say $E \geq D$ if $E - D$ is effective. Further, for any set of
divisors $\{D_1, \ldots D_k \}$, we may define $\sup_i D_i$ as the
smallest divisor $D$ such that every $D - D_i$ is effective.
Similarly, we may define $\inf_i D_i$ as the greatest divisor $D$
such that every $D_i - D$ is effective.

The \emph{zeta function of divisors} of $X$ is defined by
\[
Z(X, T) := \sum_{D \in E(X)} T^{deg(D)} = \prod_{P \text{ prime
divisor}} \frac{1}{1 - T^{deg(P)}}
\]
where $deg(D)$ is the projective degree of the divisor; see
\cite{HaessigWanDivisors} for the definition. Note, when $X$ is a
curve, then $Z(X,T)$ is the usual Weil zeta function, and hence is
rational. When $dim(X) \geq 2$, then $Z(X, T)$ is never rational
\cite[Theorem 3.3]{WanZetaCycles}.

Let $A(X)$ denote the group of linear equivalence classes of
divisors of $X$. Define $A^+(X) \subset A(X)$ as the monoid of
effective divisor classes. Wan \cite{WanZetaCycles} has conjectured
that when the rank of $A^+(X)$ is finite, then $Z(X,T)$ is $p$-adic
meromorphic. This was proven by Wan \cite[Theorem
4.3]{WanZetaCycles} when the rank is one. Examples of $p$-adic
meromorphy are also known for rank greater than one.

Next, let $W \hookrightarrow \bb A^n_{\bb F_q(X)}$ be an affine variety 
defined over $\bb F_q(X)$. For each divisor
$D$ of $X$, define the Riemann-Roch space $L(D)$ as the finite
dimensional $\bb F_q$-vector space
\[
L(D) := \{ f \in \bb F_q(X) | (f) + D \geq 0 \} \cup \{ 0 \}.
\]
Denote its dimension by $l(D)$. The \emph{Riemann-Roch zeta function} 
of $W$ over $\bb F_q(X)$ is defined by
\[
Z_{RR}(W(\bb F_q(X)), T) := \sum_{D \in E(X)} S_D T^{deg(D)}
\]
where $S_D$ denotes the number of $\bb F_q(X)$-rational points of
$W$ whose affine coordinates lie in $L(D)$; that is,
\[
S_D := \# \left( L(D)^n \cap W \right) = \# \{ f := (f_1, \ldots,
f_n) \in L(D)^n | f \in W \}.
\]
Our main theorem in this paper, proven in the next section, will
show that the Riemann-Roch zeta function is often $p$-adic
meromorphic.

\begin{question}
Assuming the rank of $A^+(X)$ is finite, is the Riemann-Roch zeta function always $p$-adic meromorphic?
\end{question}

\begin{question}
If $X$ is a curve, is the Riemann-Roch zeta function always rational?
\end{question}

We are now able to reduce the height zeta function as a sum of the
two zeta functions above. First, recall the following decomposition
of projective space. Denote by $y = [y_0 : y_1 : \cdots : y_n]$ the
points of $\bb P^n_{\bb F_q(X)}$. Let $\c S_{n+1}$ denote the
symmetric group on the set $\{ 0, 1, \ldots, n\}$ and fix $\sigma
\in \c S_{n+1}$. Let $A \in GL_{n+1}(\bb F_q(X))$ and denote its
rows by $A_i$. For each $i$, define the affine space $\bb
A^{n-i}_{\bb F_q(X)} \hookrightarrow \bb P^n_{\bb F_q(X)}$ as the
variety defined by setting $A_{\sigma(0)} y = \cdots
A_{\sigma(i-1)}y = 0$ and $A_{\sigma(i)}y = 1$. We then have the
decomposition of projective space into a disjoint union:
\[
\bb P^n_{\bb F_q(X)} = \bb A^n_{\bb F_q(X)} \cup \bb A^{n-1}_{\bb
F_q(X)} \cup \cdots \cup \bb A^0_{\bb F_q(X)}.
\]
Let $Y \hookrightarrow \bb P^n_{\bb F_q(X)}$ be a projective
variety. For each $i$ we may define the affine algebraic variety
$Y_i := Y \cap \bb A^i_{\bb F_q(X)}$. To keep the notation clean, we
have neglected to indicate notationally the dependence of $Y_i$ on
$A$ and the choice of $\sigma$.

\begin{theorem}\label{T: Reduction}
We may decompose the height zeta function as follows:
\[
Z_\text{ht}(Y(\bb F_q(X)), A, T) = \frac{1}{Z(X,T)} \sum_{i = 0}^n
Z_{RR}(Y_i(\bb F_q(X)), T).
\]
\end{theorem}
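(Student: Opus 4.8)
\medskip
\noindent The plan is to compute the right-hand side as a generating series indexed by effective divisors and to match it with the height zeta function one point at a time. Since the affine charts $\bb A^n_{\bb F_q(X)},\dots,\bb A^0_{\bb F_q(X)}$ partition $\bb P^n_{\bb F_q(X)}$, the pieces $Y_0,\dots,Y_n$ partition $Y(\bb F_q(X))$, so it suffices to analyze the twisted height on a single chart and then sum over $i$. Fix $y \in Y_i$. After the normalization defining $\bb A^i_{\bb F_q(X)}$, the vector $Ay$ has a string of leading zero entries, one pivot entry equal to $1$, and remaining entries $f = (f_1,\dots,f_i) \in \bb F_q(X)^i$, which are exactly the affine coordinates of $y$ in this chart. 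The nonzero coordinates of $Ay$ are then $1$ together with the nonzero $f_j$, so (discarding any $f_j = 0$, which carries no divisor)
\[
\inf(Ay) = \inf\bigl(0,(f_1),\dots,(f_i)\bigr) = -D_f, \qquad D_f := \sup\bigl(0,-(f_1),\dots,-(f_i)\bigr),
\]
and therefore $h_{X,A}(y) = \deg D_f$. Note $D_f$ is effective because the supremum includes $0$.

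The key observation is that $D_f$ is precisely the smallest effective divisor for which $f \in L(D_f)^i$: for an effective divisor $D$ one has $f_j \in L(D)$ for all $j$ if and only if $(f_j) + D \geq 0$ for all $j$, i.e. $-(f_j) \le D$ for all $j$, i.e. $D \ge D_f$. Consequently, for the affine variety $W = Y_i$ the defining quantity of the Riemann--Roch zeta function is
\[
S_D = \#\{\, y \in Y_i : D_f \le D \,\},
\]
so that, interchanging the order of summation,
\[
Z_{RR}(Y_i(\bb F_q(X)),T) = \sum_{D \in E(X)} \#\{\, y \in Y_i : D_f \le D \,\}\, T^{\deg D} = \sum_{y \in Y_i}\ \sum_{D \ge D_f} T^{\deg D}.
\]
Reindexing the inner sum by $D = D_f + E$ with $E \in E(X)$ (a bijection onto the effective divisors dominating $D_f$, with $\deg D = \deg D_f + \deg E$) factors out the zeta function of divisors,
\[
\sum_{D \ge D_f} T^{\deg D} = T^{\deg D_f} \sum_{E \in E(X)} T^{\deg E} = T^{\,h_{X,A}(y)}\, Z(X,T).
\]
Summing over $y \in Y_i$ and then over $i$ gives $\sum_{i=0}^n Z_{RR}(Y_i(\bb F_q(X)),T) = Z(X,T)\sum_{y \in Y} T^{\,h_{X,A}(y)}$, and dividing by $Z(X,T)$ yields the claimed decomposition once the height-weighted point count $\sum_{y} T^{\,h_{X,A}(y)}$ is identified with $Z_\text{ht}(Y(\bb F_q(X)),A,T)$ directly from the definition of the counting function $N_d(A,Y)$.

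I expect the crux to be the second paragraph: correctly translating the twisted logarithmic height $-\deg\inf(Ay)$ into the minimal Riemann--Roch divisor $D_f$ and verifying the equivalence $f \in L(D)^i \Leftrightarrow D \ge D_f$. This demands care with the chart normalization, with coordinates $f_j$ that vanish (which impose no condition, since $0 \in L(D)$), and with checking that the relevant suprema and infima are well defined in the partially ordered group $Div(X/\bb F_q)$. By contrast, the two remaining ingredients --- that the charts form a disjoint cover, so that the height may be summed piecewise, and the monoid reindexing $D = D_f + E$ that extracts the factor $Z(X,T)$ --- are routine once the height has been expressed through $D_f$.
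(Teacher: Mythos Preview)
Your argument is correct and is essentially the paper's own: both decompose $Y$ into the charts $Y_i$, identify the twisted height with $\deg D_f$ where $D_f=\sup(Ay)_\infty$ is the least effective divisor with the affine coordinates lying in $L(D_f)$, and then use that $S_E=\#\{y:D_f\le E\}=\sum_{0\le D\le E}H_D^{(i)}$ to link the Riemann--Roch series to the height series via $Z(X,T)$ (your reindexing $D=D_f+E$ is exactly the paper's Cauchy product). The only cosmetic difference is direction: you start from $Z_{RR}(Y_i,T)$ and factor out $Z(X,T)$, whereas the paper starts from $Z_{\mathrm{ht}}$ and multiplies by $Z(X,T)$.
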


\noindent {\bf Remark.} The decomposition in the theorem depends on $A$ and the
choice of $\sigma \in \c S_{n+1}$. This flexibility often allows one
to prove $Z_\text{ht}(Y(\bb F_q(X)), A, T)$ is $p$-adic meromorphic
for specific examples.

\begin{proof}
Since every coordinate of $y = [y_0 : y_1 : \cdots : y_n] \in \bb P_{\bb
F_q(X)}^n$ cannot be simultaneously zero, we may assume $y_i=1$ for
some $i$. It follows that $- \inf_i (y_i) = \sup_i (y_i)_\infty$,
where $(y_i)_\infty$ is the polar divisor of $y_i \in \bb F_q(X)$.
Thus,
\begin{align*}
Z_\text{ht}(Y(\bb F_q(X)), A, T) &= \sum_{[y_0 : \cdots : y_n] \in
Y} T^{-deg \inf (Ay)} \\
&= \sum_{i=0}^n \sum_{(y_0, \cdots, y_n) \in
Y_i} T^{deg \sup (Ay)_{\infty}} \\
&= \sum_{i=0}^n \sum_{D \in E(X)} H_D^{(i)} T^{deg(D)}
\end{align*}
where $H_D^{(i)} := \# \{ (y_0, \cdots, y_n) \in Y_i |
\sup(Ay)_\infty = D \}$; note, we put round-brackets to indicate
that the point $(y_0, \ldots, y_n)$ lies in the affine space $\bb A_{\bb F_q(X)}^i$. Now,
\[
Z(X, T) \left( \sum_{D \in E(X)} H_D^{(i)} T^{deg(D)} \right) =
\sum_{ E \in E(X) } \left( \sum_{0 \leq D \leq E} H_D^{(i)} \right)
T^{deg(E)}.
\]
The theorem follows since $S_E = \sum_{0 \leq D \leq E} H_D^{(i)}$.
\end{proof}

%%%-----------------------------------------------------------------------
\section{The Riemann-Roch zeta function}\label{S: RRZeta}

In this section, we will study the $p$-adic meromorphy of the
Riemann-Roch zeta function. Our main result will be to show, for a
large class of affine varieties, that it is $p$-adic entire. After
this, we present some explicit examples.

\begin{theorem}\label{T: Main}
Suppose $A^+(X)$ is of rank one and $dim(X) \geq 2$. Let $Y
\hookrightarrow \bb A^n_{\bb F_q(X)}$ be an affine hypersurface
defined by a polynomial $F \in \bb F_q(X)[y_1, \ldots, y_n]$. Define
$d := deg(F)$. If $n > d^{dim(X) + 1}$, then $Z_{RR}(Y(\bb F_q(X)),
T)$ is a $p$-adic entire function.
\end{theorem}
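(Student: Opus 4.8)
The plan is to interpret each coefficient $S_D$ as the number of $\bb F_q$-rational points of an explicit affine variety and then to force $p$-adic entireness through a uniform Ax--Katz divisibility estimate whose strength grows with $deg(D)$. First I would fix a divisor $C$ bounding the polar divisors of all coefficients of $F$, so that for any effective $D$ and any $f = (f_1, \ldots, f_n) \in L(D)^n$ the value $F(f)$ lands in $L(dD + C)$: each monomial of degree at most $d$ in the $f_i$ has poles bounded by $dD$, and multiplying by a coefficient adds at most $C$. Choosing an $\bb F_q$-basis of $L(D)$ writes $f_i = \sum_j c_{ij} e_j$, and choosing an $\bb F_q$-basis of $L(dD+C)$ expresses $F(f)$ as an $l(dD+C)$-tuple of polynomials in the $N := n\, l(D)$ variables $c_{ij}$, each of degree at most $d$. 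The condition $f \in Y$, i.e. $F(f) = 0$ in $\bb F_q(X)$, becomes the simultaneous vanishing of all these polynomials, so
\[
S_D = \#\{ c \in \bb F_q^{N} : P_1(c) = \cdots = P_M(c) = 0\}, \qquad M := l(dD+C),
\]
a point count over $\bb F_q$ for a system of $M$ equations of degree at most $d$ in $N$ variables.

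To this system I would apply the Ax--Katz theorem, which gives
\[
\text{ord}_q S_D \ge \left\lceil \frac{N - Md}{d} \right\rceil = \left\lceil \frac{n\, l(D)}{d}\right\rceil - l(dD+C).
\]
The crux is then to show that, under the rank-one and $dim(X) \ge 2$ hypotheses, this lower bound grows strictly faster than $deg(D)$. Here I would invoke the asymptotic behavior of Riemann--Roch dimensions: since $A^+(X)$ has rank one, every effective class is, up to torsion, a multiple of a fixed generator, and asymptotic Riemann--Roch gives $l(D) \sim c\, deg(D)^{e}$ with $e := dim(X)$ and a fixed constant $c > 0$. Applying this to both $D$ and $dD + C$, which lies on the same ray and has degree $\sim d\, deg(D)$, yields $l(dD+C) \sim c\, d^{e}\, deg(D)^{e}$, so that
\[
\text{ord}_q S_D \gtrsim c\left( \frac{n}{d} - d^{e} \right) deg(D)^{e}.
\]
Precisely when $n > d^{e+1} = d^{dim(X)+1}$ the leading coefficient is positive, giving $\text{ord}_q S_D \gg deg(D)^{e}$ with $e \ge 2$.

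Finally I would translate this divisibility into $p$-adic entireness. Grouping coefficients by degree, the coefficient of $T^m$ in $Z_{RR}$ is $\sum_{deg(D) = m} S_D$, a finite sum since $Z(X,T)$ has integer coefficients; its $p$-adic valuation is at least the minimum of $\text{ord}_p S_D$ over the finitely many classes of degree $m$, and since $\text{ord}_p S_D = r\,\text{ord}_q S_D$ this is bounded below by a positive constant times $m^{e}$. Dividing by $m$ and using $e \ge 2$ shows that the valuation of the $T^m$-coefficient, divided by $m$, tends to $\infty$, which is exactly the condition for the power series to define a $p$-adic entire function. I expect the main obstacle to be the third step: making the two-sided asymptotic comparison of $l(D)$ and $l(dD+C)$ rigorous and uniform over all classes of a given degree, in particular confirming positivity of the leading constant and that both spaces share it, so that the threshold comes out exactly as $d^{dim(X)+1}$, rather than the essentially formal Ax--Katz and convergence arguments.
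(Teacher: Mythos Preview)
Your proposal is correct and follows essentially the same route as the paper: bound the coefficients of $F$ by a fixed effective divisor (the paper's $W$, your $C$) so that $F$ maps $L(D)^n$ into $L(dD+W)$, linearize via $\bb F_q$-bases of $L(D)$ and $L(dD+W)$ to realize $S_D$ as an $\bb F_q$-point count for $l(dD+W)$ equations of degree $\le d$ in $n\,l(D)$ variables, apply Ax--Katz, and then use the rank-one asymptotic $l(D_j^*+kD)=c\,k^{\dim X}+O(k^{\dim X-1})$ on both sides to obtain $\mathrm{ord}_q S_D \ge \frac{c}{d}(n-d^{\dim X+1})k^{\dim X}+O(k^{\dim X-1})$. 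The technical point you flag as the main obstacle---that the same leading constant $c$ governs both $l(D)$ and $l(dD+C)$, uniformly over the finitely many torsion translates---is exactly what the paper imports from \cite{HaessigWanDivisors}.
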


\begin{proof}
We begin by writing
\[
Z_{RR}(Y(\bb F_q(X)), T) := \sum_{D \in E(X)} S_D T^{deg(D)} =
\sum_{k \geq 0} M_k T^k
\]
where
\begin{equation}\label{E: Mk}
M_k := \sum_{D \in E(X), deg(D) = k} S_D.
\end{equation}
We will show that under the given hypothesis, for all $k$
sufficiently large, $ord_q(S_D)$ is bounded below by a polynomial of
degree $dim(X)$. Hence, the same is true for $ord_q(M_k)$ proving
the theorem.

Decomposing $A(X)$ into its torsion part and free part, we have
$A(X) = \{D_1^*, \ldots, D_h^* \}_{tor} \oplus \bb Z D$ for some
divisor $D > 0$. Let $\mu := deg(D)$. Thus, every effective divisor
$E$ of $X$ has degree $k \mu$ for some $k \in \bb Z_{\geq 0}$.
Further, if $deg(E) = k \mu$, then $E$ is linearly equivalent to
$D_j^* + kD$ for some $j$. By \cite[Proposition
2]{HaessigWanDivisors}, for $k \gg 0$ and $deg(E) = k \mu$, the dimension of the Riemann-Roch space satisfies:
\[
l(E) = l(D_j^* + k D) = c k^{dim(X)} + O(k^{dim(X)-1})
\]
where $c := \frac{D^{dim(X)}}{dim(X)!}$. From now on, we will
assume $k$ is large enough so that the above formula holds for any
effective divisor of degree $k \mu$.

Next, let us write $F = \sum_{j=1}^J \alpha_j y^{v_j}$ where
$\alpha_j \in \bb F_q(X)$ and $v_j \in \bb Z_{\geq 0}^n$. With the effective divisor $W :=
\sup_j (\alpha_j)_\infty$, we have $\alpha_j \in L(W)$ for every
$j=1,\ldots, J$. Find $i$ and $\rho$ such that $D_i^* + \rho D$ is
linearly equivalent to $W$. Using this, we observe that if $(f_1,
\ldots, f_n) \in L(E)^n$, then $F(f_1, \ldots, f_n) \in L(dE + W)$.

Let $u_1, \ldots, u_s$ be a basis of $L(E)$ over $\bb F_q$. Thus,
for every $h \in L(E)$, there exists unique $x_i \in \bb F_q$ such
that $h = x_1 u_1 + \cdots + x_s u_s$, and vice versa. Using this structure, we may
rewrite the polynomial $F(y_1,\ldots, y_n)$ with the substitution
\begin{align*}
&y_1 = x_1^{(1)} u_1 + \cdots + x_s^{(1)} u_s \\
&y_2 = x_1^{(2)} u_1 + \cdots + x_s^{(2)} u_s \\
&\quad \vdots \qquad \vdots \\
&y_n = x_1^{(n)} u_1 + \cdots + x_s^{(n)} u_s.
\end{align*}
Thus, we now have $F$ as a function in the variables $x_i^{(j)}$ for
$1 \leq i \leq s$ and $1 \leq j \leq n$. If we let $w_1, \ldots, w_r$ be a basis of $L(dE + W)$, then we may write
\[
F(x_i^{(j)}) = g_1(x_i^{(j)}) w_1 + \cdots + g_r(x_i^{(j)}) w_r
\]
for some $g_i \in \bb F_q[x_i^{(j)} : 1 \leq i \leq s$ and $1 \leq
j \leq n]$. Notice that $deg(g_i) \leq d$. Writing $F$ in this
form means that elements of $S_E$ are precisely the elements in the
zero locus of the set $\{g_1, \ldots, g_r\}$ in the affine space
$\bb A^{sn}_{\bb F_q}$. Therefore, by a theorem of Ax-Katz
\cite{WanElemKatz}
\[
ord_q(S_E) \geq \frac{ sn - \sum_{i=1}^r deg(g_i)}{\max deg(g_i)}.
\]
Since, $s = l(E)$ and $r = l(dE+W)$, and recalling that $E$ and $W$
are linearly equivalent to $D_j^* + kD$ and $D_i^* + \rho D$
respectively, we see that
\begin{align*}
ord_q(S_E) &\geq \frac{ sn - \sum_{i=1}^r deg(g_i)}{\max deg(g_i)}
\\
&\geq \frac{1}{d} \left[l(E) n - l(dE+W) d \right] \\
&= \frac{1}{d} \left[l(D_j^* + kD) n - l((dD_j^* + D_i^*) +
(dk+\rho)D) d \right]
\\
&= \frac{1}{d} \left[c k^{dim(X)} n - c(kd+\rho)^{dim(X)}
d \right] + O(k^{dim(X)-1}) \\
&= \frac{c k^{dim(X)}}{d}[n - d^{dim(X)+1}] + O(k^{dim(X)-1}).
\end{align*}
The theorem follows.
\end{proof}

\begin{corollary}\label{C: Main}
Suppose $A^+(X)$ is of rank one and $dim(X) \geq 2$. Let $Y \subset
\bb A^n_{\bb F_q(X)}$ be the affine variety defined by the
polynomials $F_1, \ldots, F_r \in \bb F_q(X)[y_1, \ldots, y_n]$.
Define $d_i := deg F_i$. Suppose $n
> \sum_{i=1}^r d_i^{dim(X)+1}$. Then $Z_{RR}(Y(\bb F_q(X)), T)$ is a
$p$-adic entire function.
\end{corollary}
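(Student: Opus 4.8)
The plan is to reproduce the argument of Theorem~\ref{T: Main}, now applied simultaneously to the $r$ defining polynomials rather than a single hypersurface. I would begin exactly as before, writing $Z_{RR}(Y(\bb F_q(X)), T) = \sum_{k \geq 0} M_k T^k$ with $M_k := \sum_{deg(D) = k\mu} S_D$, and reduce to showing that $ord_q(S_E)$ grows like a polynomial of degree $dim(X)$ in $k$ for all large $k$. The key point is that the $p$-adic entirety of the series follows purely from such a lower bound on the $q$-adic valuations of the coefficients, so the entire problem again collapses to estimating $ord_q(S_E)$ for a single effective divisor $E$ linearly equivalent to $D_j^* + kD$.

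Next I would carry out the same linearization. Fix a basis $u_1, \ldots, u_s$ of $L(E)$ over $\bb F_q$, where $s = l(E)$, and substitute $y_\ell = \sum_{i} x_i^{(\ell)} u_i$ into each $F_m$. For each $m$, let $W_m := \sup_j (\alpha_{m,j})_\infty$ be the effective divisor capturing the poles of the coefficients of $F_m$, so that $F_m$ evaluated on $L(E)^n$ lands in $L(d_m E + W_m)$. Choosing a basis of $L(d_m E + W_m)$ of size $l(d_m E + W_m)$, I expand each $F_m$ in that basis to obtain polynomials $g_{m,1}, \ldots, g_{m,r_m}$ in the $sn$ variables $x_i^{(\ell)}$, each of degree at most $d_m$. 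The crucial observation is that $S_E$ counts precisely the common zeros in $\bb A^{sn}_{\bb F_q}$ of the \emph{combined} system $\{ g_{m,t} : 1 \leq m \leq r,\ 1 \leq t \leq r_m \}$, since a point of $L(E)^n$ lies in $Y$ exactly when all $F_m$ vanish, which happens iff every $g_{m,t}$ vanishes.

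I would then apply the Ax--Katz theorem to this pooled system. This yields
\[
ord_q(S_E) \geq \frac{ sn - \sum_{m=1}^r \sum_{t} deg(g_{m,t})}{\max_{m,t} deg(g_{m,t})} \geq \frac{1}{\max_m d_m}\left[ l(E)\, n - \sum_{m=1}^r l(d_m E + W_m)\, d_m \right],
\]
using $\sum_t deg(g_{m,t}) \leq l(d_m E + W_m)\, d_m$ and $\max_{m,t} deg(g_{m,t}) \leq \max_m d_m$. Substituting the Riemann--Roch asymptotic $l(D_j^* + kD) = ck^{dim(X)} + O(k^{dim(X)-1})$ into each term, and noting $l(d_m E + W_m) = c(kd_m + \rho_m)^{dim(X)} + O(k^{dim(X)-1}) = c\, d_m^{dim(X)} k^{dim(X)} + O(k^{dim(X)-1})$, the leading coefficient becomes a positive multiple of $n - \sum_{m=1}^r d_m^{dim(X)+1}$, which is strictly positive by hypothesis. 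Hence $ord_q(S_E)$ is bounded below by a positive-leading polynomial of degree $dim(X)$ in $k$, giving the desired growth on $ord_q(M_k)$.

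The main obstacle is a bookkeeping subtlety rather than a genuine difficulty: one must be careful that the Ax--Katz bound uses the \emph{total} number of polynomials in the combined system and their \emph{individual} degrees, so the $-\sum_t deg(g_{m,t})$ term aggregates correctly across all $m$, and the denominator $\max_{m,t} deg(g_{m,t})$ is bounded by $\max_m d_m$ uniformly in $k$ (the bases and their sizes depend on $k$, but the degree bounds $deg(g_{m,t}) \leq d_m$ do not). Provided the constant $\max_m d_m$ in the denominator does not wash out the strict positivity of the leading coefficient---which it does not, since it is independent of $k$---the polynomial lower bound of degree $dim(X)$ persists, and the theorem follows exactly as in the single-polynomial case.
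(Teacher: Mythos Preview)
Your proposal is correct and follows exactly the route the paper intends: the corollary is stated without proof immediately after Theorem~\ref{T: Main}, and the implicit argument is precisely to rerun that proof with the $r$ defining polynomials handled simultaneously in the Ax--Katz step, which you have carried out carefully. The bookkeeping you flag (pooling the $g_{m,t}$ across $m$, bounding the denominator by $\max_m d_m$, and extracting the leading term $\tfrac{c\,k^{\dim(X)}}{\max_m d_m}\bigl[n-\sum_m d_m^{\dim(X)+1}\bigr]$) is exactly right.
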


\bigskip\noindent {\bf Example 1.} Consider affine $n$-space
$\bb A^n_{\bb F_q(X)}$. In this case, we have
\begin{equation}\label{E: AffineRR}
Z_{RR}(\bb A^n(\bb F_q(X)), T) = \sum_{D \geq 0} q^{n l(D)}
T^{deg(D)}.
\end{equation}
It follows that if $A^+(X)$ is of rank one, then this is a $p$-adic
entire function when $dim(X) \geq 2$ and a rational function when
$X$ is a curve.

Wan \cite{WanHieght} has conjectured that when the rank of $A^+(X)$
is finite, then the height zeta function of projective space $\bb
P^n_{\bb F_q(X)}$ is $p$-adic meromorphic. An equivalent conjecture
is that the Riemann-Roch zeta function of affine space $\bb A^n_{\bb
F_q(X)}$ is $p$-adic meromorphic.

\bigskip\noindent {\bf Example 2.}
Combining Theorem \ref{T: Main} or Corollary \ref{C: Main} with
Theorem \ref{T: Reduction}, we see that the height zeta function is
$p$-adic meromorphic for many projective varieties $Y$. Fix $A \in
GL_{n+1}(\bb F_q(X))$ and let $A_0$ denote its first row. As an
arbitrary example, consider the projective hypersurface $Y$
defined by $F := (A_0 y)^r G(y_1, \ldots, y_n)$ where $G$ is a
homogenous polynomial in $\bb F_q(X)[y_1, \ldots, y_n]$ of degree
$d$. If $A^+(X)$ is of rank one and $n > d^{dim(X) +1}$, then the
twisted height zeta function $Z_\text{ht}(Y(\bb F_q(X)), A, T)$ is
$p$-adic meromorphic.

%%%-----------------------------------------------------------------------

\end{document}